\documentclass[11pt]{amsart}
\usepackage[height=8.2in,width=5.6in,margin=1.2in]{geometry}
\usepackage{amsfonts, mathdots,color}
\usepackage{amssymb,latexsym}
\usepackage{enumerate}
\usepackage[colorlinks=true,citecolor=blue]{hyperref}
\usepackage{bm}
\makeatletter
\@namedef{subjclassname@2010}{%
  \textup{2010} Mathematics Subject Classification}
\makeatother

\newtheorem{thm}{Theorem}[section]
\newtheorem{cor}[thm]{Corollary}
\newtheorem{lem}[thm]{Lemma}



\theoremstyle{definition}
\newtheorem{defin}[thm]{Definition}



\numberwithin{equation}{section}

\DeclareMathOperator*{\sgn}{sgn}

\DeclareMathOperator{\mdet}{det}


\newcommand{\C}{\mathbb{C}}
\newcommand{\dxg}{d_\chi^G}



\begin{document}


\title[Generalized matrix functions]{Completely strong superadditivity of generalized matrix functions}

\author[M. Lin]{Minghua Lin}
\address{Department of Mathematics and Statistics\\
University of Victoria\\
 Victoria, BC, Canada, V8W 3R4.
}
\email{mlin87@ymail.com}

\author[S. Sra]{Suvrit Sra}\address{MPI for Intelligent Systems\\ T\"ubingen, Germany}\email{suvrit@gmail.com}

\date{}

\begin{abstract} We prove that generalized matrix functions  satisfy a block-matrix strong superadditivity inequality over the cone of  positive semidefinite matrices. Our result extends a recent result  of Paksoy-Turkmen-Zhang  \cite{PTZ14}. As an application, we obtain a short proof of a classical inequality of Thompson (1961) on block matrix determinants.

\end{abstract}

\subjclass[2010]{15A45, 15A69}

\keywords{Generalized matrix function, strongly superadditive, determinantal inequality, positive definite matrix.
}

\maketitle

\section{Introduction}
Let $\mathbb{M}_n$ denote the algebra of all $n\times n$ complex matrices.  Let $\mathcal{A}\subset \mathbb{M}_n$. A functional $f: \mathcal{A} \to \mathbb{R}$ is called \emph{superadditive} if for all $A, B \in \mathcal{A}$
\begin{equation*}
  f(A+B)\ge f(A)+f(B),
\end{equation*}
and it is called \emph{strongly superadditive} if for all $A, B, C\in \mathcal{A}$
\begin{equation*}
  f(A+B+C)+f(C)\ge f(A+C)+f(B+C).
\end{equation*}

It is  known (e.g., \cite[Eq.(5)]{TCL11}) that the determinant  is strongly superadditive (and so superadditive) over the cone of positive semidefinite matrices.  That is,  \begin{eqnarray}\label{superadd-det}
\det (A+B+C)+\det C\ge \det (A+C)+\det (B+C) \end{eqnarray} for $A, B, C\ge 0$.

\begin{defin}
  \label{def.dxg}
  Let $\chi$ be a character of the subgroup $G$ of the symmetric group $S_n$. The \emph{generalized matrix function} $\dxg: \mathbb{M}_n \to \C$ is defined by
  \begin{equation}
    \label{eq:1}
    \dxg(A) := \sum_{\sigma \in G} \chi(\sigma) \prod_{i=1}^n a_{i\sigma(i)},
  \end{equation}
  where $A=[a_{ij}]$.
\end{defin}
When $G=S_n$ and $\chi(\sigma)=\sgn(\sigma)$ then $\dxg(A)$ reduces to the determinant $\det(A)$, while for $\chi(\sigma)\equiv 1$ we obtain $\dxg(A)=\text{per}(A)$, the permanent of $A$.

Recently,  Paksoy, Turkmen and Zhang \cite{PTZ14} presented a natural extension of (\ref{superadd-det}) via an embedding approach and through tensor products. More precisely,  for $A, B, C\ge 0$ they proved
\begin{eqnarray}\label{superadd-gmf}
\dxg (A+B+C)+\dxg (C)\ge \dxg (A+C)+\dxg (B+C).
\end{eqnarray}

This paper extends the above-cited strong superadditivity results to block matrices, thereby obtaining ``completely strong superadditivity'' for generalized matrix functions.

Before stating our problem formally, let us fix some notation.
The conjugate transpose of $X\in\mathbb{M}_n$ is denoted by $X^*$. For Hermitian matrices $X, Y\in \mathbb{M}_n$, the inequality  $X\ge Y$ means $X-Y$ is positive semidefinite.
Let $\mathbb{M}_m(\mathbb{M}_n)$ be the algebra of $m\times m$ block matrices with each block in $\mathbb{M}_n$. We will denote members of $\mathbb{M}_m(\mathbb{M}_n)$ via bold letters such as $\bm{A}$. A map (not necessarily linear) $\phi: \mathbb{M}_n\to \mathbb{M}_k$ is
\emph{positive} if it maps positive semidefinite matrices to positive semidefinite matrices. This map is \emph{completely positive} if for each positive integer $m$, the blockwise map $\Phi:  \mathbb{M}_m(\mathbb{M}_n)\to  \mathbb{M}_m(\mathbb{M}_k)$  defined by
\begin{equation}
  \label{map}
  \Phi\Big([A_{i,j}]_{i,j=1}^m\Big)=[\phi(A_{i,j})]_{i,j=1}^m
\end{equation}
is positive. The determinant is well-known to be completely positive  \cite{Hua55}. More generally, it is known that the generalized matrix functions are completely positive (e.g., \cite[Theorem 3.1]{Zhang12}).

The following definition extends the notion of strong superadditivity.

\begin{defin} Let $\bm{A}=[A_{i,j}]_{i,j=1}^m, \bm{B}=[B_{i,j}]_{i,j=1}^m, \bm{C}=[C_{i,j}]_{i,j=1}^m\in  \mathbb{M}_m(\mathbb{M}_n)$  be Hermitian. A map  $\phi:   \mathbb{M}_n \to  \mathbb{M}_k$ is said to be \emph{completely strongly superadditive} (CSS) if for each positive integer $m$, the map $\Phi$ defined in (\ref{map}) satisfies $$\Phi(\bm{A}+\bm{B}+\bm{C})+\Phi(\bm{C})\ge \Phi(\bm{A}+\bm{C})+\Phi(\bm{B}+\bm{C}).$$  \end{defin}

Our main assertion in this paper is as follows.
\begin{thm}
  \label{thm.main} Generalized matrix functions are CSS over the cone of positive semidefinite matrices. In particular, the determinant and permanent are CSS.
\end{thm}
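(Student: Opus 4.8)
Since $\dxg$ takes scalar values, the map $\Phi$ of the theorem sends $\mathbb{M}_m(\mathbb{M}_n)$ into $\mathbb{M}_m$, so proving CSS amounts to showing that the $m\times m$ scalar matrix
$$\bm{W}:=\Big[\dxg\big((\bm A+\bm B+\bm C)_{ij}\big)+\dxg\big((\bm C)_{ij}\big)-\dxg\big((\bm A+\bm C)_{ij}\big)-\dxg\big((\bm B+\bm C)_{ij}\big)\Big]_{i,j=1}^m$$
is positive semidefinite. The plan is to exhibit $\bm W$ as a Gram matrix. The starting point is the tensor representation underlying the complete positivity of $\dxg$: writing a positive semidefinite $\bm X=[X_{ij}]\in\mathbb{M}_m(\mathbb{M}_n)$ as a Gram matrix $\bm X=\bm M^*\bm M$ produces vectors $x^{(i)}_1,\dots,x^{(i)}_n$ in a Hilbert space $H$ (the columns of the $i$-th block column of $\bm M$) with $(X_{ij})_{st}=\langle x^{(i)}_s,x^{(j)}_t\rangle$, and the fundamental inner-product identity for the symmetry class of tensors then gives $\dxg(X_{ij})=\tfrac{|G|}{\chi(e)}\langle \Pi^X_i,\Pi^X_j\rangle$, where $\Pi^X_i:=x^{(i)}_1\ast\cdots\ast x^{(i)}_n$ is the decomposable symmetrized tensor. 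Thus $[\dxg(X_{ij})]$ is a positive multiple of a Gram matrix, which is exactly the cited complete positivity result; I would upgrade this identity to the four-term combination defining $\bm W$.

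Next I would encode the three summands in a single orthogonal direct sum. Factoring $\bm A,\bm B,\bm C$ as above yields column vectors $p^{(i)}_s,q^{(i)}_s,r^{(i)}_s\in H$, and inside $H^{\oplus 3}=H\oplus H\oplus H$, with the three copies mutually orthogonal, the blocks of $\bm A+\bm B+\bm C$, $\bm A+\bm C$, $\bm B+\bm C$, and $\bm C$ are precisely the Gram matrices of the vectors $p^{(i)}_s\oplus q^{(i)}_s\oplus r^{(i)}_s$, $\,p^{(i)}_s\oplus 0\oplus r^{(i)}_s$, $\,0\oplus q^{(i)}_s\oplus r^{(i)}_s$, and $0\oplus 0\oplus r^{(i)}_s$. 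Consequently, denoting by $V_i,W_i,X_i,Y_i$ the corresponding decomposable symmetrized tensors in the symmetry class over $H^{\oplus3}$, the identity above gives $\bm W_{ij}=\tfrac{|G|}{\chi(e)}\big(\langle V_i,V_j\rangle+\langle Y_i,Y_j\rangle-\langle W_i,W_j\rangle-\langle X_i,X_j\rangle\big)$.

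The decisive step is to expand each tensor by the multilinearity of the star product over the decomposition $H\oplus H\oplus H$. Writing $V_i=\sum_{\ell}V^\ell_i$ over type-labelings $\ell:\{1,\dots,n\}\to\{p,q,r\}$, where $V^\ell_i$ has $s$-th factor taken from the slot $\ell(s)$, one has $W_i=\sum_{\ell\in\{p,r\}^n}V^\ell_i$, $\,X_i=\sum_{\ell\in\{q,r\}^n}V^\ell_i$, and $Y_i=V^{\mathbf r}_i$ for the all-$r$ labeling. Because the three copies of $H$ are orthogonal, the inner-product identity forces $\langle V^\ell_i,V^{\ell'}_j\rangle=0$ unless $\ell'=\ell\circ\sigma$ for some $\sigma\in G$; in particular such pairs satisfy $\mathrm{range}(\ell)=\mathrm{range}(\ell')$. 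Collecting the four terms and grouping by this common type-content, the scalar coefficient multiplying each surviving class becomes $1-\mathbf 1[\mathrm{range}\subseteq\{p,r\}]-\mathbf 1[\mathrm{range}\subseteq\{q,r\}]+\mathbf 1[\mathrm{range}\subseteq\{r\}]$, which by inclusion–exclusion collapses to $\mathbf 1[\,p,q\in\mathrm{range}(\ell)\,]$. Therefore, setting $Z_i:=\sum_{\ell:\,p,q\in\mathrm{range}(\ell)}V^\ell_i$, the vanishing of cross terms between distinct type-contents yields $\langle Z_i,Z_j\rangle=\langle V_i,V_j\rangle+\langle Y_i,Y_j\rangle-\langle W_i,W_j\rangle-\langle X_i,X_j\rangle$, so that $\bm W=\tfrac{|G|}{\chi(e)}\big[\langle Z_i,Z_j\rangle\big]_{ij}\ge 0$ as a Gram matrix. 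Specializing to $m=1$ recovers the scalar inequality \eqref{superadd-gmf}.

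I expect the main obstacle to be two pieces of bookkeeping. The first is pinning down the exact symmetry-class inner-product identity, with its normalization $\chi(e)/|G|$ and the conjugation conventions, so that it is valid for an arbitrary character $\chi$ rather than only a linear or irreducible one; here I would lean on the cited complete positivity results. The second is verifying carefully that cross inner products between tensors of different type-content vanish, since it is precisely this orthogonality that makes the inclusion–exclusion coefficient collapse exactly to the indicator that both $p$ and $q$ occur, and hence realizes $\bm W$ as a genuine Gram matrix.
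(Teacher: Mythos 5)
Your argument is correct, but it takes a genuinely different route from the paper's proof. What you do is extend the embedding/tensor argument of Paksoy--Turkmen--Zhang \cite{PTZ14} from the scalar case ($m=1$) to the block case: you represent each entry $\dxg(X_{ij})$ as a (positive multiple of an) inner product of decomposable symmetrized tensors built from Gram vectors --- the same mechanism behind the complete positivity result of \cite{Zhang12} --- then embed the three factorizations orthogonally in $H\oplus H\oplus H$, expand by multilinearity of the star product, and use the key orthogonality observation (inner products of star products survive only when the two type-labelings are related by an element of $G$, hence have equal range) so that inclusion--exclusion leaves exactly the labelings using both $p$ and $q$; this exhibits the defect matrix $\bm{W}$ as the Gram matrix $\bigl[\langle Z_i,Z_j\rangle\bigr]_{i,j}$, hence positive semidefinite. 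The paper instead argues at the operator level: it quotes the strong superadditivity of tensor powers from \cite[Lemma 2.2]{TCL11} (Lemma~\ref{lem.superadd-tensor}), transfers it to the induced operator $K$ via the compression $K(A)=Z^*(\otimes^m A)Z$ of Lemma~\ref{lem.repr}, passes to block matrices through a second compression $[K(A_{i,j})]=\bm{P}^*K(\bm{A})\bm{P}$, which rests on the multiplicativity $K(AB)=K(A)K(B)$ (Lemma~\ref{compression}), and finally reads off the scalar inequality entrywise via \eqref{eq:10}. In effect, the combinatorial inclusion--exclusion you carry out by hand is exactly what is outsourced in the paper to the cited lemma of \cite{TCL11}, while your Gram-vector bookkeeping is replaced there by compressions. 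Your approach is self-contained and makes the positivity mechanism transparent (the defect is explicitly a Gram matrix, from which equality conditions could in principle be read off); its cost is the normalization and conjugation bookkeeping you flag, which is routine for characters of degree $1$ --- note this is also the setting in which the paper's Section~\ref{proof-main result} machinery operates, so your scope matches the paper's. The paper's route is shorter and more modular: each step is either a known result or an easy consequence of multiplicativity of $K$, and the same compression scheme transfers any operator inequality preserved under compressions to the block setting.
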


We slightly overload the notation and extract a special case for later use. For any $\bm{A}=[A_{i,j}]_{i,j=1}^m\in \mathbb{M}_m(\mathbb{M}_n)$, define $\mdet_m(\bm{A}) :=[\det A_{i,j}]_{i,j=1}^m$.

\begin{cor} \label{cor1}
  Let  $\bm{A}, \bm{B}\in \mathbb{M}_m(\mathbb{M}_n)$ be  positive semidefinite. Then
  \begin{equation}\label{C=0}
    \mdet_m(\bm{A}+\bm{B}) \ge \mdet_m(\bm{A})+\mdet_m(\bm{B}).
  \end{equation}
  In particular,
  \begin{align*}
    \det\big(\mdet_m(\bm{A}+\bm{B})\big)
    \ge
    \det\big(\mdet_m(\bm{A})\big) + \det\big(\mdet_m(\bm{B})\big).
  \end{align*}
\end{cor}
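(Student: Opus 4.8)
The plan is to read off both inequalities as consequences of Theorem~\ref{thm.main} together with two elementary properties of the scalar determinant. For the first inequality~\eqref{C=0}, I would specialize the Main Theorem to the choice $G=S_n$, $\chi=\sgn$, so that $\dxg=\det$ and the induced blockwise map $\Phi$ of~\eqref{map} is precisely $\mdet_m$. Applying the CSS property with $\bm{C}=0$ and noting that the $n\times n$ zero matrix has determinant $0$, so that $\mdet_m(\mathbf{0})=0$, the defining inequality
\[
  \Phi(\bm{A}+\bm{B}+\mathbf{0})+\Phi(\mathbf{0})\ge \Phi(\bm{A}+\mathbf{0})+\Phi(\bm{B}+\mathbf{0})
\]
collapses to exactly~\eqref{C=0}.

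For the second (scalar) inequality, I would set $P:=\mdet_m(\bm{A})$ and $Q:=\mdet_m(\bm{B})$. Since the determinant is completely positive, both $P$ and $Q$ are $m\times m$ positive semidefinite Hermitian matrices, and the first part already gives $\mdet_m(\bm{A}+\bm{B})\ge P+Q\ge 0$. I would then chain two standard facts about the determinant on the PSD cone: first, monotonicity (if $X\ge Y\ge 0$ then $\det X\ge\det Y$, via Weyl's eigenvalue inequalities) applied to $X=\mdet_m(\bm{A}+\bm{B})$ and $Y=P+Q$; second, ordinary superadditivity $\det(P+Q)\ge\det P+\det Q$, which is the $C=0$ instance of~\eqref{superadd-det}. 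Combining these yields
\[
  \det\bigl(\mdet_m(\bm{A}+\bm{B})\bigr)\ge \det(P+Q)\ge \det P+\det Q,
\]
which is the asserted conclusion.

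There is no substantial obstacle here: all of the analytic content lives in Theorem~\ref{thm.main}, and the corollary is a short deduction from it. The only points requiring a moment's care are verifying that $\mdet_m$ sends the zero block matrix to the zero matrix (so that the $\bm{C}=0$ specialization genuinely removes the $\Phi(\bm{C})$ term), and confirming via complete positivity that $P$ and $Q$ truly lie in the PSD cone, since both the monotonicity and the superadditivity steps for $\det$ require positive semidefinite arguments.
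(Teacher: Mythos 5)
Your proof is correct and is essentially the intended deduction: the paper states Corollary~\ref{cor1} without explicit proof, and your route---specializing Theorem~\ref{thm.main} to $\dxg=\det$ with $\bm{C}=0$ for~\eqref{C=0}, then chaining determinant monotonicity on the PSD cone with the $C=0$ case of~\eqref{superadd-det} for the scalar inequality---is exactly the argument the authors leave implicit. Your care about $\mdet_m(\bm{0})=0$ and the complete positivity of $\det$ (ensuring $P,Q\ge 0$) is appropriate and consistent with the paper's cited facts.
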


The proof of  Theorem \ref{thm.main} is given in Section \ref{proof-main result}. In Section \ref{thompson-proof}, we apply Corollary \ref{cor1} to obtain a new proof of a determinantal inequality due to Thompson (1961).

\section{Auxiliary results and proof of Theorem \ref{thm.main}}\label{proof-main result}

We start by recalling standard notation from multilinear algebra \cite{Mar73, Mer97}. Let $\mathcal{V}$ be an $n$-dimensional Hilbert space, and let $\chi$ be a character of degree $1$ on a subgroup $G$ of $S_m$ the symmetric group on $m$ elements. The \emph{symmetrizer} induced by $\chi$ on the tensor product space $\otimes^m \mathcal{V}$ is defined by its action
\begin{equation}
  \label{eq:2}
  S(v_1\otimes \cdots \otimes v_m) := \frac{1}{|G|}\sum_{\sigma \in G} \chi(\sigma) v_{\sigma^{-1}(1)}\otimes \cdots \otimes v_{\sigma^{-1}(m)}.
\end{equation}
Elements of the form~\eqref{eq:2} span a vector space that is denoted as
\begin{equation}
  \label{eq:3}
  \mathcal{V}_\chi^m(G) := S(\otimes^m \mathcal{V}) \subset \otimes^m \mathcal{V}.
\end{equation}
This vector space is the space of the symmetry class of tensors associated with $G$ and $\chi$. It can be verified that $\mathcal{V}_\chi^m(G)$ is an invariant subspace of $\otimes^m \mathcal{V}$. The elements of $\mathcal{V}_\chi^m(G)$ are denoted by the following ``star-product'':
\begin{equation}
  \label{eq:4}
  v_1\star \cdots \star v_m := S(v_1\otimes \cdots \otimes v_m).
\end{equation}
For any linear operator $T$ on $\mathcal{V}$ there is a unique \emph{induced operator}
$K(T) : \mathcal{V}_\chi^m(G) \to \mathcal{V}_\chi^m(G)$ which satisfies (see also \cite{CKLi} for related material):
\begin{equation}
  \label{eq:5}
  K(T)(v_1 \star \cdots \star v_m) =  Tv_1\star \cdots \star Tv_m.
\end{equation}
This operation is usually written as $K(T)v^{\star} = Tv^{\star}$, where $v^{\star} \equiv v_1 \star \cdots \star v_m$.

From an orthonomal basis for $\mathcal{V}$ we can induce an orthonomal basis for $\mathcal{V}_\chi^m(G)$, which will allow us to write down a matrix representation of the operator $K(T)$. To define such a matrix we need some more notation from \cite{Mar73}.

Let $\Gamma_{m,n}$ denote the totality of sequences $\alpha = (\alpha_1,\ldots,\alpha_m)$ such that $1 \le \alpha_i \le n$ for $1\le i \le m$. Thus, $|\Gamma_{m,n}| = n^m$. Two sequences $\alpha$ and $\beta$ in $\Gamma_{m,n}$ are said to be G-equivalent, denoted $\alpha \sim_G \beta$, if there exists a permutation $\sigma \in G$ such that $\alpha = (\beta_{\sigma(1)},\ldots,\beta_{\sigma(n)})$. This equivalence partitions $\Gamma_{m,n}$ into equivalence classes; let $\Delta$ be a system of distinct representatives for these equivalence classes; we order sequences in $\Delta$ using lexicographic order.

For all $\alpha \in \Gamma_{m,n}$ the set of all permutations $\sigma \in G$ for which $\alpha\sigma = \alpha$ is called the \emph{stabilizer} of $\alpha$ and is denoted by $G_\alpha$. Clearly, it is a subgroup of $G$; we denote its order by $\nu(\alpha)$. We define the set $\bar{\Delta} \subset \Delta$ consisting of those $\alpha \in \Delta$ for which $G_{\alpha} \subset \text{ker} \chi$. Since $\chi$ was assumed to be a character of degree 1, $\text{ker} \chi$ is the set of permutations $\sigma$ for which $\chi(\sigma)=1$. Thus, $\alpha \in \bar{\Delta}$ if and only if $\chi(\sigma)=1$ for all $\sigma \in G_{\alpha}$. Therefore,
\begin{equation}
  \label{eq:6}
  \sum_{\sigma \in G_{\alpha}} =
  \begin{cases}
    \nu(\alpha), & \text{if}\ \ \alpha \in \bar{\Delta},\\
    0, & \text{if}\ \ \alpha \not\in \bar{\Delta}.
  \end{cases}
\end{equation}

Now suppose $B = \{e_1,\ldots,e_n\}$ is an orthonomal basis for $\mathcal{V}$. Then,
\begin{equation*}
  B^\star := \{e_{\alpha_1} \star \cdots \star e_{\alpha_m} \mid \alpha \in \bar{\Delta}\},
\end{equation*}
is an orthogonal basis for $\mathcal{V}_\chi^m(G)$, which can be normalized to obtain an orthonomal basis---see e.g., \cite[Theorem 3.2]{Mar73}, which proves that
\begin{equation*}
  \bar{B^\star} = \{ (\sqrt{|G|/\nu(\alpha)})(e_{\alpha_1} \star \cdots \star e_{\alpha_m} \mid \alpha \in \bar{\Delta}),
\end{equation*}
is an orthonormal basis for $\mathcal{V}_\chi^m(G)$ with respect to the induced inner product on $\otimes^m \mathcal{V}$. Moreover, $\dim \mathcal{V}_\chi^m(G) = |\bar{\Delta}|$.

Let $T \in \mathcal{L}(\mathcal{V},\mathcal{V})$. From \cite[Theorem~4.1]{Mar73} we know that $K(T) = \otimes^m T \mid \mathcal{V}_\chi^m(G)$, the \emph{restriction} of the tensor space $\otimes^m T$ to the symmetry class $\mathcal{V}_\chi^m(G)$. Thus, $K(T)v^{\star} = (\otimes^m T)v^{\star}$. Finally, it can be shown that \cite[p. 126]{Mar73} that for multi-indices $\alpha, \beta \in \bar{\Delta}$, the $(\alpha,\beta)$ entry of $K(A)$ is given by
\begin{equation}
  \label{eq:10}
  [K(A)]_{\alpha,\beta} = \frac{1}{\sqrt{\nu(\alpha)\nu(\beta)}}d_{\chi}^G(A^*[\beta|\alpha]),
\end{equation}
where $A^*[\beta|\alpha]$ is the $(\beta,\alpha)$ submatrix of $A^*$. For self-adjoint $A$, we see that we can recover $\dxg(A)$ picking out a diagonal entry of $K(A)$ corresponding to $\beta=\alpha=(1,\ldots,m)$.

With this notation in hand we can state the following easy but key lemma.
\begin{lem}
  \label{lem.repr}
  Let $T \in \mathcal{L}(\mathcal{V},\mathcal{V})$ be a self-adjoint operator with $A$ as its matrix representation. Let $K(T)$ be the induced operator corresponding to the symmetry class described by $\chi$ and subgroup $G \subset S_m$, and let $K(A)$ be the matrix representation of $K(T)$. Then, there exists a matrix $Z$ (of suitable size) such that
  \begin{equation*}
    K(A) = Z^*(\otimes^m A)Z.
  \end{equation*}
\end{lem}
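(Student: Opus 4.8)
The plan is to realize $K(A)$ as the compression of $\otimes^m A$ to the invariant subspace $\mathcal{V}_\chi^m(G)$, taking $Z$ to be the isometric embedding of that subspace into $\otimes^m\mathcal{V}$. The crucial structural fact, which I would establish first, is that $\mathcal{V}_\chi^m(G)$ is invariant under $\otimes^m T$. This follows because the symmetrizer of \eqref{eq:2} can be written as $S=\frac{1}{|G|}\sum_{\sigma\in G}\chi(\sigma)P(\sigma)$, where $P(\sigma)$ permutes the tensor factors, and $\otimes^m T$ commutes with every $P(\sigma)$: permuting factors and then applying $T$ to each agrees with applying $T$ to each and then permuting. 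Hence $\otimes^m T$ commutes with $S$. Since $\chi$ is a degree-$1$ character, $S$ is the orthogonal projection onto $\mathcal{V}_\chi^m(G)=\text{range}(S)$, and commutation with a projection forces its range to be invariant. This is exactly the content behind the cited fact \cite[Theorem~4.1]{Mar73} that $K(T)=\otimes^m T\mid\mathcal{V}_\chi^m(G)$.

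Next I would define $Z$ concretely. With the orthonormal basis $\bar{B^\star}$ of $\mathcal{V}_\chi^m(G)$ and the standard orthonormal tensor basis $\{e_{\beta_1}\otimes\cdots\otimes e_{\beta_m}:\beta\in\Gamma_{m,n}\}$ of $\otimes^m\mathcal{V}$ in hand, let $Z$ be the $n^m\times|\bar{\Delta}|$ matrix whose columns are the coordinate vectors, in the tensor basis, of the normalized elements $\sqrt{|G|/\nu(\alpha)}\,(e_{\alpha_1}\star\cdots\star e_{\alpha_m})$, $\alpha\in\bar{\Delta}$. By construction $Z$ represents the inclusion $J:\mathcal{V}_\chi^m(G)\hookrightarrow\otimes^m\mathcal{V}$ with respect to these orthonormal bases, so it is an isometry: $Z^*Z=I_{|\bar{\Delta}|}$ while $ZZ^*$ equals the matrix of $S$ in the tensor basis. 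I would stress that $Z$ depends only on $\chi$, $G$, $m$, and $n$, and not on $A$; this $A$-independence is what will matter for the later superadditivity application.

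Finally I would verify the identity. Take any $v^{\star}\in\mathcal{V}_\chi^m(G)$ with coordinate vector $\xi$ relative to $\bar{B^\star}$; then $Z\xi$ is its coordinate vector in the tensor basis, $(\otimes^m A)Z\xi$ is the tensor-basis coordinate vector of $(\otimes^m T)v^{\star}$, and applying $Z^*$ returns $\bar{B^\star}$-coordinates. By the invariance established above, $(\otimes^m T)v^{\star}$ already lies in $\mathcal{V}_\chi^m(G)$, so no information is lost under the projection $ZZ^*=S$, and we obtain that $Z^*(\otimes^m A)Z\,\xi$ equals the $\bar{B^\star}$-coordinates of $K(T)v^{\star}=K(A)\xi$. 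Letting $\xi$ range over all coordinate vectors yields $K(A)=Z^*(\otimes^m A)Z$.

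The main obstacle is pinpointing exactly where invariance enters: for an arbitrary, non-invariant subspace, $Z^*(\otimes^m A)Z$ would be only the compression of $\otimes^m A$ to that subspace, which in general differs from a genuine restriction. It is precisely the commutation $[\otimes^m T,S]=0$ together with $S$ being an orthogonal projection—the latter a consequence of $\chi$ being a degree-$1$ character—that upgrades the compression to the restriction $K(T)$ and makes the identity exact. Everything else is routine bookkeeping with the two orthonormal bases, for which the normalization factors $\sqrt{|G|/\nu(\alpha)}$ are exactly what guarantee that $Z$ is an isometry.
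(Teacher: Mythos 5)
Your proof is correct and follows essentially the same route as the paper: both realize $K(A)$ as the compression $Z^*(\otimes^m A)Z$, where $Z$ collects the orthonormal basis vectors $e_\alpha^{\star}$ of $\mathcal{V}_\chi^m(G)$, and both rest on the fact $K(T)=\otimes^m T \mid \mathcal{V}_\chi^m(G)$, which the paper simply cites from Marcus and you re-derive via commutation of $\otimes^m T$ with the symmetrizer. Your bookkeeping is in fact slightly more careful than the paper's: the paper asserts $ZZ^*=I$, whereas, as you note, the correct statements are $Z^*Z=I_{|\bar{\Delta}|}$ and $ZZ^*=S$, the orthogonal projection onto the symmetry class (which is what the later multiplicativity argument actually needs, together with invariance).
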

\begin{proof}
  From the discussion above it follows that $[K(A)]_{\alpha,\beta}=\langle K(T)e_{\alpha}^{\star}, e_{\beta}^{\star}\rangle$. Since $K(T)v^{\star}=(\otimes^m T)v^{\star}$, we obtain $[K(A)]_{\alpha,\beta}=\langle (\otimes^m A)e_{\alpha}^{\star},e_{\beta}^{\star}\rangle$. Collecting the vectors $e_{\alpha}^{\star}$ into a suitable matrix $Z$ (note $ZZ^*=I$), we therefore immediately obtain
  \begin{equation*}
    K(A) = Z^*(\otimes^m A)Z.\qedhere
  \end{equation*}
\end{proof}
\noindent Observe that Lemma~\ref{lem.repr} easily yields the well-known multiplicativity of $K$, i.e.,
\begin{equation}
  \label{eq:8}
  K(AB) = K(A)K(B),
\end{equation}
since $\otimes^k(AB)=(\otimes^k A)(\otimes^kB)$ and $ZZ^*=I$.

Next, we refer to the following result from \cite[Lemma 2.2]{TCL11}.
\begin{lem} \label{lem.superadd-tensor} Let $A, B, C\in \mathbb{M}_\ell$ be positive semidefinite.  Then
$$\otimes^k(A + B + C)+\otimes^kC\ge\otimes^k(A+C)+\otimes^k(B+C)$$  for any positive integer $k$.\end{lem}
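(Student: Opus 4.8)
The plan is to prove Lemma~\ref{lem.superadd-tensor} by reducing the tensor-power inequality to the scalar strong superadditivity of a simple product functional, exploiting the fact that positive semidefinite matrices can be simultaneously handled through their eigenvalues once we diagonalize appropriately. First I would recall that for positive semidefinite $A,B,C$ the claim concerns the operator $\otimes^k(\cdot)$; the natural strategy is to use the integral/mixture representation or, more directly, to invoke the scalar inequality
\begin{equation*}
  (a+b+c)^k + c^k \ge (a+c)^k + (b+c)^k \qquad (a,b,c \ge 0),
\end{equation*}
which is exactly strong superadditivity of $t \mapsto t^k$ on $[0,\infty)$, and then lift it to the matrix level.

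The key steps, in order, would be as follows. I would first establish the scalar inequality above, which follows because $t\mapsto t^k$ is convex and superadditive on the nonnegative reals; strong superadditivity for a single-variable power can be verified by fixing $c$ and checking that $g(a) := (a+b+c)^k - (a+c)^k$ is nondecreasing in $a$ (its derivative is $k[(a+b+c)^{k-1} - (a+c)^{k-1}] \ge 0$), so $g(a) \ge g(0) = (b+c)^k - c^k$, which rearranges to the desired form. Next, the central difficulty is that $A$, $B$, and $C$ need not commute, so one cannot diagonalize them simultaneously and argue entrywise. To circumvent this I would appeal to the known operator convexity and monotonicity machinery: the map $X \mapsto \otimes^k X$ on positive semidefinite matrices can be analyzed via the fact that $\otimes^k$ is multiplicative and that the tensor power of a sum expands into a sum of tensor products of the summands. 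Concretely, I expect the cleanest route is to cite or adapt the argument already attributed to \cite[Lemma 2.2]{TCL11}, where the inequality is obtained from a telescoping identity for $\otimes^k$ together with the positivity of the cross terms.

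The main obstacle will be precisely the noncommutativity just mentioned: while the scalar inequality is elementary, promoting it to the Löwner order on tensor powers requires care, since the naive binomial-type expansion of $\otimes^k(A+C)$ produces a sum over all $2^k$ choices of tensor factors, and one must show that the surviving difference
\begin{equation*}
  \otimes^k(A+B+C) + \otimes^k C - \otimes^k(A+C) - \otimes^k(B+C)
\end{equation*}
is positive semidefinite. I would organize this by collecting, for each subset pattern of factors drawn from $\{A, B, C\}$, the terms in which at least one $A$-factor and at least one $B$-factor appear; every such term is a tensor product of positive semidefinite matrices (hence positive semidefinite), and these are exactly the terms that cancel from the right-hand side but survive on the left, yielding the inequality. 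The bookkeeping of which mixed terms remain is the delicate part, but it is purely combinatorial once the positivity of each tensor monomial is granted.

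Finally, since the statement is quoted directly from \cite{TCL11}, I would in practice simply invoke that reference for the full verification and present the scalar reduction above as the conceptual heart of the argument, noting that the Löwner-order conclusion follows by the same telescoping-and-positivity scheme applied factor by factor.
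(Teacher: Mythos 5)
Your proposal is correct, and it is worth noting that the paper itself never proves this lemma: it is imported wholesale as \cite[Lemma 2.2]{TCL11}, so your expansion argument actually supplies something the paper delegates to a reference. The heart of your argument is valid and can be made completely precise. By multilinearity of the tensor product, each side expands into monomials $X_1\otimes\cdots\otimes X_k$; partitioning the $3^k$ monomials of $\otimes^k(A+B+C)$ according to whether they contain (i) at least one $A$ and at least one $B$, (ii) at least one $A$ but no $B$, (iii) at least one $B$ but no $A$, or (iv) only $C$'s, one obtains
\begin{equation*}
\otimes^k(A+B+C)+\otimes^k C-\otimes^k(A+C)-\otimes^k(B+C)\;=\;\sum X_1\otimes\cdots\otimes X_k,
\end{equation*}
where the sum runs over all sequences $(X_1,\ldots,X_k)\in\{A,B,C\}^k$ containing at least one $A$ and at least one $B$: class (ii) cancels against $\otimes^k(A+C)$, class (iii) against $\otimes^k(B+C)$, and the all-$C$ monomial, which is subtracted \emph{twice} on the right, is restored by the extra $\otimes^k C$ on the left. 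This last cancellation is the only delicate piece of bookkeeping, is precisely why the term $\otimes^k C$ must appear in the statement, and is the step your write-up glosses over; you should spell it out. Since each surviving monomial is a tensor product of positive semidefinite matrices, hence positive semidefinite, the difference is positive semidefinite and the lemma follows. Two caveats: your scalar inequality $(a+b+c)^k+c^k\ge(a+c)^k+(b+c)^k$, while correctly proved, carries no logical weight in the final argument (as you concede, it cannot be promoted to the L\"owner order because of noncommutativity), so it should be framed as motivation or dropped; and your fallback of simply citing \cite{TCL11} is exactly what the paper does, so the expansion proof is your only added value---fortunately it is complete, and short enough to be worth writing out in full.
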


An immediate corollary of Lemmas~\ref{lem.repr} and \ref{lem.superadd-tensor} is the following.

\begin{cor}
  \label{cor.k}
  Let $A, B, C\in \mathbb{M}_\ell$ be positive semidefinite. Then
  \begin{equation}
    \label{eq:7}
    K(A+B+C) + K(C) \ge K(A+C) + K(B+C).
  \end{equation}
\end{cor}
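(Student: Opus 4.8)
The plan is to combine the representation from Lemma~\ref{lem.repr} with the tensor-power strong superadditivity of Lemma~\ref{lem.superadd-tensor}, exploiting crucially the fact that the congruence matrix $Z$ does not depend on the argument of $K$.

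First I would invoke Lemma~\ref{lem.repr} to write each of the four induced operators in the common form $K(X) = Z^*(\otimes^m X)Z$, as $X$ ranges over $A+B+C$, $C$, $A+C$, and $B+C$. The decisive observation is that $Z$ is assembled from the fixed basis vectors $e_\alpha^\star$ of the symmetry class $\mathcal{V}_\chi^m(G)$, so it is one and the same matrix for all four choices of $X$, depending only on $G$ and $\chi$ and never on the particular operator. This is exactly what permits factoring $Z$ out simultaneously from all four terms.

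Next I would apply Lemma~\ref{lem.superadd-tensor} with $k=m$ to the positive semidefinite matrices $A,B,C$, obtaining
$$\otimes^m(A+B+C) + \otimes^m C - \otimes^m(A+C) - \otimes^m(B+C) \ge 0.$$
Conjugating this Loewner inequality by $Z^*$ on the left and $Z$ on the right preserves positive semidefiniteness, since congruence by a fixed matrix is order preserving on Hermitian matrices: if $P\ge 0$ then $Z^*PZ\ge 0$. Using the common factorization, this yields
$$K(A+B+C) + K(C) - K(A+C) - K(B+C) = Z^*\big[\otimes^m(A+B+C)+\otimes^m C-\otimes^m(A+C)-\otimes^m(B+C)\big]Z \ge 0,$$
which is precisely the asserted inequality~\eqref{eq:7}.

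Since the argument is essentially a one-line congruence, there is no serious obstacle to overcome; the only point deserving care is the argument-independence of $Z$, which is what elevates Lemma~\ref{lem.repr} from a bare factorization into a tool that respects the additive structure of the arguments. I would state this explicitly to justify pulling $Z$ outside the difference of tensor powers, after which the conclusion is immediate.
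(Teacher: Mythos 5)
Your proof is correct and takes exactly the same route as the paper, which also deduces Corollary~\ref{cor.k} as an immediate consequence of Lemma~\ref{lem.repr} and Lemma~\ref{lem.superadd-tensor} by conjugating the tensor-power inequality with the matrix $Z$. Your explicit observation that $Z$ depends only on $G$ and $\chi$ (not on the operator) is precisely the unstated detail that justifies the paper's one-line deduction.
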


\begin{lem}\label{compression}
  Let $\bm{A}=[A_{i,j}]_{i,j=1}^m\in\mathbb{M}_m(\mathbb{M}_n)$  be positive semidefinite. Then  the matrix $[K(A_{i,j})]_{i,j=1}^m$ is a compression of the  matrix $K(\bm{A})$.
\end{lem}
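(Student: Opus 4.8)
The plan is to realize $[K(A_{i,j})]_{i,j=1}^m$ as a \emph{principal} submatrix of $K(\bm A)$ in the induced orthonormal basis; since passing to a sub-collection of an orthonormal basis is exactly a compression by an isometry, this proves the claim. Throughout I work with the entry formula \eqref{eq:10}, applying it both to the $n\times n$ blocks $A_{i,j}$ and to the $mn\times mn$ matrix $\bm A$, using the same group $G$ and character $\chi$. I index the $mn$ coordinates of $\bm A$ by pairs $(i,p)$ with $1\le i\le m$ and $1\le p\le n$, so that the $\big((i,p),(j,q)\big)$ entry of $\bm A$ equals $(A_{i,j})_{p,q}$. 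Multi-indices for $K(\bm A)$ are then sequences of such pairs, while multi-indices for $K(A_{i,j})$ are sequences valued in $\{1,\dots,n\}$; both have the same length $r$, the degree of the tensor power.

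Next I single out the relevant multi-indices. For a block label $i$ and a multi-index $\alpha=(\alpha_1,\dots,\alpha_r)$ valued in $\{1,\dots,n\}$, let $(i;\alpha):=\big((i,\alpha_1),\dots,(i,\alpha_r)\big)$ be the multi-index for $\bm A$ whose block coordinate is constantly $i$. I would verify two bookkeeping facts. First, the stabilizer satisfies $G_{(i;\alpha)}=G_{\alpha}$, since a permutation fixes $(i;\alpha)$ iff it fixes $\alpha$, the constant block coordinate playing no role; consequently $\nu(i;\alpha)=\nu(\alpha)$, and $(i;\alpha)$ lies in the distinguished set $\bar{\Delta}$ for $\bm A$ exactly when $\alpha$ lies in the corresponding set for $A_{i,j}$. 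Second, distinct pairs $(i,\alpha)$ give $G$-inequivalent multi-indices: a permutation cannot alter the multiset of block coordinates, which is $\{i,\dots,i\}$, so $(i;\alpha)\sim_G(i';\alpha')$ forces $i=i'$ and $\alpha\sim_G\alpha'$, hence $\alpha=\alpha'$ when representatives are chosen. Thus the normalized vectors $e_{(i;\alpha)}^{\star}$, as $i$ ranges over $\{1,\dots,m\}$ and $\alpha$ over the representatives in $\bar{\Delta}$, form a sub-collection of the orthonormal basis $\bar{B^\star}$ of the larger symmetry class, and the inclusion of their span defines the desired isometry $U$.

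The heart of the matter is the submatrix identity $\bm A^{*}\big[(j;\beta)\,\big|\,(i;\alpha)\big]=A_{i,j}^{*}[\beta\,|\,\alpha]$. Indeed, its $(s,t)$ entry selects row $(j,\beta_s)$ and column $(i,\alpha_t)$ of $\bm A^{*}$, which equals $\overline{(\bm A)_{(i,\alpha_t),(j,\beta_s)}}=\overline{(A_{i,j})_{\alpha_t,\beta_s}}=(A_{i,j}^{*})_{\beta_s,\alpha_t}$, precisely the $(s,t)$ entry on the right. Feeding this together with $\nu(i;\alpha)=\nu(\alpha)$ into \eqref{eq:10} gives
\begin{equation*}
  [K(\bm A)]_{(i;\alpha),(j;\beta)}
  =\frac{1}{\sqrt{\nu(\alpha)\nu(\beta)}}\,\dxg\!\big(A_{i,j}^{*}[\beta\,|\,\alpha]\big)
  =[K(A_{i,j})]_{\alpha,\beta}.
\end{equation*}
Organizing the index set $\{(i;\alpha)\}$ so that $i$ labels the outer $m\times m$ block structure and $\alpha$ labels the entries within each block, this says exactly that the principal submatrix of $K(\bm A)$ cut out by these indices is $[K(A_{i,j})]_{i,j=1}^m$. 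Hence $[K(A_{i,j})]=U^{*}K(\bm A)U$, a compression.

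The only delicate point is the index combinatorics of the second paragraph: one must confirm that the chosen multi-indices are pairwise $G$-inequivalent and that their stabilizers, hence the normalizing factors $\nu$, match those of the blocks, so that the selected entries assemble into a genuine principal submatrix with no mismatch in the orthonormal normalization. Once this is in place, the submatrix identity and \eqref{eq:10} finish the computation mechanically. Note that positive semidefiniteness of $\bm A$ is not actually needed for this compression identity; it only enters when the lemma is later combined with Corollary~\ref{cor.k}.
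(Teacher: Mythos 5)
Your proof is correct, but it takes a genuinely different route from the paper. The paper's proof is factorization-based: it writes $\bm{A}=R^*R$ (this is where positive semidefiniteness enters), partitions $R=[R_1,\ldots,R_m]$ so that $A_{i,j}=E_i^*R^*RE_j$ for extraction matrices $E_i$, and then invokes multiplicativity of induced matrices, $K(XY)=K(X)K(Y)$, applied to the \emph{rectangular} matrices $E_i$ (following the approach of \cite{Zhang12}), to get $K(A_{i,j})=K(E_i)^*K(\bm{A})K(E_j)$; the compression isometry is then assembled from the $K(E_i)$. You instead work entirely at the level of the entry formula \eqref{eq:10}: by restricting to multi-indices with constant block coordinate, checking that stabilizers and lexicographic representatives match, and verifying the submatrix identity $\bm A^{*}[(j;\beta)\,|\,(i;\alpha)]=A_{i,j}^{*}[\beta\,|\,\alpha]$, you exhibit $[K(A_{i,j})]_{i,j=1}^m$ as a literal \emph{principal submatrix} of $K(\bm A)$. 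Each approach has its advantages: the paper's argument is shorter and leans on standard machinery (multiplicativity and induced matrices of rectangular matrices, which themselves rest on Cauchy--Binet-type identities), whereas yours is more self-contained and elementary, needing only \eqref{eq:10} plus index combinatorics, and it proves a slightly stronger statement---the compression identity holds for \emph{arbitrary} $\bm{A}\in\mathbb{M}_m(\mathbb{M}_n)$, not just positive semidefinite ones, as you correctly note; positivity is only needed downstream when the lemma is combined with Corollary~\ref{cor.k}. The price you pay is exactly the bookkeeping you flag: one must confirm that the constant-block multi-indices are pairwise $G$-inequivalent, that their stabilizers (hence the normalizations $\nu$) agree with those of the small symmetry class, and that lexicographically least representatives are preserved under the embedding $\alpha\mapsto(i;\alpha)$---all of which check out under the natural identification of the pair $(i,p)$ with the index $(i-1)n+p$.
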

\begin{proof}
 We  follow an approach similar to \cite{Zhang12}.   Since $\bm{A}\ge 0$, we can write it as $\bm{A}=R^*R$. Now partition $R=[R_1,\ldots,R_m]$ where each $R_i$, $1 \le i \le m$, is an $mn\times n$ complex matrix. With this partitioning we see that $A_{i,j}=R_i^*R_j$. Also, with this notation, we have $R_i = RE_i$, where $E_i$ is a suitable $mn \times n$ matrix that extracts the $i$th block from $R$.

The crucial property to exploit is the multiplicativity of $K$ and that $K(A^*)=K(A)^*$~\cite[Theorem~4.2]{Mar73}. Consider, thus the block matrix $[K(A_{i,j})]_{i,j=1}^m$. We have
\begin{align*}
  K(A_{i,j}) &= K(R_i^*R_j) = K(E_i^*R^*RE_j)\\
  &= K(E_i)^*K(R^*R)K(E_j) = P_i^*K(\bm{A})P_j.
\end{align*}
In other words,
\begin{equation*}
  [K(A_{i,j})] = \bm{P}^*K(\bm{A})\bm{P},\qquad
  \text{where}\quad
  \bm{P}={\scriptsize\begin{bmatrix} P_1 && \\ &\ddots& \\&&P_m\end{bmatrix}.}\qedhere
\end{equation*}
\end{proof}

 \vspace{0.1in}

\noindent We are now in a position to present a proof of Theorem~\ref{thm.main}.

 \vspace{0.1in}

\noindent\emph{Proof of Theorem \ref{thm.main}.} Let  $\bm{A}=[A_{i,j}]_{i,j=1}^m, \bm{B}=[B_{i,j}]_{i,j=1}^m, \bm{C}=[C_{i,j}]_{i,j=1}^m\in \mathbb{M}_m(\mathbb{M}_n)$ be  positive semidefinite. By Corollary \ref{cor.k},
\begin{equation}
\label{eq:9}
K(\bm{A}+\bm{B}+\bm{C})+K(\bm{C})\ge K(\bm{A}+\bm{C})+K(\bm{B}+\bm{C}).
\end{equation}
By Lemma \ref{compression}, $[K(A_{i,j})]_{i,j=1}^m$ is a compression of  $K(\bm{A})$, which, combined with~\eqref{eq:9} yields the inequality $$[K(A_{i,j}+B_{i,j}+C_{i,j})]_{i,j=1}^m +[K(C_{i,j})]_{i,j=1}^m  \ge[K(A_{i,j}+C_{i,j})]_{i,j=1}^m+[K(B_{i,j}+C_{i,j})]_{i,j=1}^m.$$
Taking into account (\ref{eq:10}), it follows that
$$[\dxg(A_{i,j}+B_{i,j}+C_{i,j})]_{i,j=1}^m +[\dxg(C_{i,j})]_{i,j=1}^m  \ge[\dxg(A_{i,j}+C_{i,j})]_{i,j=1}^m+[\dxg(B_{i,j}+C_{i,j})]_{i,j=1}^m.$$
  therewith establishing the theorem.
\qed

\section{A proof of Thompson's result}\label{thompson-proof}
Thompson \cite{Tho61} proved the following elegant determinantal inequality.
\begin{thm}\label{thompson}
Let $\bm{A}\in\mathbb{M}_m(\mathbb{M}_n)$ be positive semidefinite. Then \begin{eqnarray}\label{thompson-det} \det \bm{A}\le \det \bigl({\det}_m (\bm{A})\bigr). \end{eqnarray} \end{thm}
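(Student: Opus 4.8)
The plan is to argue by induction on the outer dimension $m$, using the superadditivity packaged in Corollary~\ref{cor1} to drive the inductive step. The base case $m=1$ is an identity, since $\mdet_1(\bm A)=[\det A_{1,1}]$ forces $\det\bm A=\det A_{1,1}=\det\bigl(\mdet_1(\bm A)\bigr)$. For the inductive step I would first reduce to the case $\bm A>0$ by a standard density argument (replace $\bm A$ by $\bm A+\varepsilon I$ and let $\varepsilon\downarrow 0$), so that every Schur complement below is well defined. Then I would peel off the last block row and column, writing $\bm A=\left(\begin{smallmatrix}\bm A' & \xi\\ \xi^* & A_{m,m}\end{smallmatrix}\right)$ with $\bm A'=[A_{i,j}]_{i,j=1}^{m-1}>0$ and $\xi=[A_{i,m}]_{i=1}^{m-1}$, and apply the Schur determinant formula on both sides. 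Writing $S:=\xi^*(\bm A')^{-1}\xi$, $M':=\mdet_{m-1}(\bm A')$, and $d:=[\det A_{i,m}]_{i=1}^{m-1}$, this gives $\det\bm A=\det\bm A'\cdot\det(A_{m,m}-S)$ and, since $\mdet_m(\bm A)=\left(\begin{smallmatrix}M' & d\\ d^* & \det A_{m,m}\end{smallmatrix}\right)$, also $\det\bigl(\mdet_m(\bm A)\bigr)=\det M'\cdot\bigl(\det A_{m,m}-d^*(M')^{-1}d\bigr)$.

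Since the induction hypothesis gives $\det\bm A'\le\det M'$ (whence $\det M'>0$, so $M'>0$ and all factors below are nonnegative), it suffices to compare the two Schur complements, namely to prove $\det(A_{m,m}-S)\le\det A_{m,m}-d^*(M')^{-1}d$; the full inequality then follows by multiplying the two factor inequalities. I would split this into two pieces. First, ordinary superadditivity of the determinant on $\mathbb M_n$ (the $C=0$ case of~\eqref{superadd-det}), applied to $A_{m,m}=(A_{m,m}-S)+S$ with both summands positive semidefinite (here $A_{m,m}-S=T>0$ is the Schur complement of $\bm A'$), yields $\det(A_{m,m}-S)\le\det A_{m,m}-\det S$. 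It then remains to show $\det S\ge d^*(M')^{-1}d$.

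This last inequality is the crux, and it is where Corollary~\ref{cor1} enters. The key move is to introduce the auxiliary block matrix $\hat{\bm A}$ obtained from $\bm A$ by replacing the corner block $A_{m,m}$ with $S=\xi^*(\bm A')^{-1}\xi$; its Schur complement relative to $\bm A'$ is $S-S=0$, so $\hat{\bm A}\ge0$. Applying the block map $\mdet_m$, which carries the positive semidefinite cone into itself (the complete positivity of the determinant cited in the introduction, and in any case recoverable from Corollary~\ref{cor1} by decomposing $\hat{\bm A}$ into rank-one positive semidefinite pieces, on each of which $\mdet_m$ is manifestly positive semidefinite), yields $\mdet_m(\hat{\bm A})=\left(\begin{smallmatrix}M' & d\\ d^* & \det S\end{smallmatrix}\right)\ge0$. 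Reading off the Schur complement of this matrix against $M'>0$ delivers exactly $\det S\ge d^*(M')^{-1}d$, and combining the three inequalities closes the induction.

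The main obstacle I anticipate is this crux step, and specifically identifying the correct auxiliary matrix. The naive attempt to feed the splitting $\bm A=\hat{\bm A}+(\text{corner perturbation})$ directly into the superadditive form $\det\bigl(\mdet_m(\bm X+\bm Y)\bigr)\ge\det\bigl(\mdet_m(\bm X)\bigr)+\det\bigl(\mdet_m(\bm Y)\bigr)$ of Corollary~\ref{cor1} loses the term $\det(A_{m,m}-S)$ altogether, because a perturbation supported on a single diagonal block has $\mdet_m$ of rank one and hence vanishing determinant. The successful route instead uses Corollary~\ref{cor1} only to certify positive semidefiniteness of $\mdet_m(\hat{\bm A})$ and pairs this with the elementary determinant superadditivity on $\mathbb M_n$ to absorb the remaining factor. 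A secondary technical point is justifying $M'>0$, which I would obtain for free from the induction hypothesis as indicated above.
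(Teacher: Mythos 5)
Your proof is correct, and it takes a genuinely different route from the paper's. The paper follows Thompson's own reduction: factor $\bm A=\mathbf T^*\mathbf T$ with $\mathbf T$ block upper triangular, normalize $T_{i,i}=I_n$ so that the claim becomes $\det\bigl(\mdet_m(\mathbf T^*\mathbf T)\bigr)\ge 1$, and induct by deleting the first block row/column; the engine of its inductive step is the matrix-valued superadditivity \eqref{C=0} of Corollary~\ref{cor1}, in the form $\mdet_k(\widehat T^*\widehat T+V^*V)\ge\mdet_k(\widehat T^*\widehat T)+\mdet_k(V^*V)$, combined with the exact identity $D^*D=\mdet_k(V^*V)$ and monotonicity of $\det$ on the positive semidefinite cone. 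You instead work on $\bm A$ directly, with no factorization and no normalization, and your two tools are different: the scalar superadditivity $\det(X+Y)\ge\det X+\det Y$ on $\mathbb{M}_n$ (the $C=0$ case of \eqref{superadd-det}) and the complete positivity of the determinant, applied to the flat extension $\hat{\bm A}$ to produce the crux $\det S\ge d^*(M')^{-1}d$. The two inductions are structurally cognate --- in the paper's normalized setting $\widehat T^*\widehat T$ is precisely the Schur complement of the $(1,1)$ block, so the splitting $\widehat T^*\widehat T+V^*V$ mirrors your $A_{m,m}=(A_{m,m}-S)+S$, and the normalization converts your crux inequality into the identity $D^*D=\mdet_k(V^*V)$ --- but the ingredients invoked are not the same. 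What each buys: the paper's route exhibits the new Corollary~\ref{cor1} as the driving force, which is the stated purpose of that section; your route uses only classical facts (scalar superadditivity and Hua's complete positivity, both quoted in the introduction), so it is essentially independent of Theorem~\ref{thm.main} --- your optional re-derivation of complete positivity from Corollary~\ref{cor1} via rank-one decomposition notwithstanding --- at the cost of needing matrix inverses, hence the $\varepsilon I$ perturbation. One small ordering point: your parenthetical ``$\det M'>0$, so $M'>0$'' is valid only because $M'=\mdet_{m-1}(\bm A')\ge 0$, i.e., it already uses complete positivity, so that fact should be invoked at that step rather than only later at the crux.
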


As an application of our result, we present a new proof of Theorem \ref{thompson}.

 \vspace{0.1in}

\noindent{\it Proof of Theorem \ref{thompson}.} As $\bm{A}\ge 0$, we may write $\bm{A}=\mathbf{T}^*\mathbf{T}$ with $\mathbf{T}=[T_{i,j}]_{i,j=1}^m$ being block upper triangular. If $\bm{A}$ is singular, (\ref{thompson-det}) is trivial. So we assume otherwise. We may further assume $T_{i,i}=I_n$, the $n\times n$ identity matrix, by pre- and post-multiplying both sides of  (\ref{thompson-det}) with $\prod_{i=1}^m\det T_{i,i}^{-*}$ and $\prod_{i=1}^m\det T_{i,i}^{-1}$, respectively. Thus, it suffices to show  \begin{eqnarray}\label{reformulation} \det\bigl({\det}_m(\mathbf{T}^*\mathbf{T})\bigr)\ge 1.  \end{eqnarray}
This reformulation is exactly what Thompson did in \cite{Tho61}.

We prove (\ref{reformulation}) by induction. When $m=2$,
\begin{align*}
  \det\big({\det}_2(\mathbf{T}^*\mathbf{T})\big) &=\det \begin{bmatrix} 1 &  \det T_{1,2}  \\ \det T_{1,2}^*  &   \det(I_n+T_{1,2}^*T_{1,2})\end{bmatrix}\\
  &=\det(I_n+T_{1,2}^*T_{1,2})-\det (T_{1,2}^*T_{1,2})\ge 1.
\end{align*}
Suppose (\ref{reformulation}) is true for $m=k$, and then the case $m=k+1$. For notational convenience, we denote $\mathbf{T}= \begin{bmatrix} I_n &  V\\ 0&\widehat{T}\end{bmatrix}$, where $V= \begin{bmatrix}T_{1,2}&\cdots & T_{1,m}\end{bmatrix}$ and $\widehat{T}=[T_{i+1,j+1}]_{i,j=1}^k$. Let $D=\begin{bmatrix}\det T_{1,2}&\cdots & \det T_{1,m}\end{bmatrix}$. Clearly, $D^*D={\det}_k(V^*V)$.

Now compute
\begin{eqnarray*}
  \mathbf{T}^*\mathbf{T}=
  \begin{bmatrix} I_n &  V\\ 0&\widehat{T}\end{bmatrix}^*
  \begin{bmatrix} I_n &   V\\ 0&\widehat{T}\end{bmatrix}
  = \begin{bmatrix} I_n &  V\\ V^*&\widehat{T}^*\widehat{T}+V^*V\end{bmatrix}.
\end{eqnarray*}
Then \begin{eqnarray*}
  \det\big({\det}_m(\mathbf{T}^*\mathbf{T})\big)&=&
  \det\begin{bmatrix}
    1 &  D\\ D^*& {\det}_k(\widehat{T}^*\widehat{T}+V^*V)
  \end{bmatrix}\\
  &=&\det\big({\det}_k(\widehat{T}^*\widehat{T}+V^*V)-D^*D\big)\\
  &\ge& \det\big({\det}_k (\widehat{T}^*\widehat{T}) +{\det}_k(V^*V)-D^*D\big)\\
  &=&\det\big({\det}_k(\widehat{T}^*\widehat{T})\big)\ge 1, %
\end{eqnarray*}
in which the first inequality is by (\ref{C=0}), while the second one is by the induction hypothesis. This completes the proof. \qed

\subsection*{Acknowledgements}
MathOverflow brought the authors to work on this topic together (see \textsf{\small http://mathoverflow.net/q/173088/}).

\end{document}